\documentclass[a4paper,reqno]{amsart}

\usepackage{amssymb}
\usepackage{latexsym}
\usepackage{amsmath}
\usepackage{amsthm}
\usepackage{bbm}

\usepackage{pgfplots}
\pgfplotsset{compat=1.15}
\usepackage{mathrsfs}
\usetikzlibrary{arrows}
\definecolor{ududff}{rgb}{0.30196078431372547,0.30196078431372547,1}
\definecolor{zzttqq}{rgb}{0.6,0.2,0}
\definecolor{xdxdff}{rgb}{0.49019607843137253,0.49019607843137253,1}
\definecolor{ududff}{rgb}{0.30196078431372547,0.30196078431372547,1}

\usepackage{hyperref}
\usepackage{xcolor}
\usepackage{amsmath}






\def\dist{{\text{\rm dist}}}

\DeclareMathOperator{\R}{\mathbb{R}}

\def\dd{\mathrm{d}}


\renewcommand\setminus{\mathbin{\mathpalette\rsetminusaux\relax}}
\newcommand\rsetminusaux[2]{\mspace{-4mu}
  \raisebox{\rsmraise{#1}\depth}{\rotatebox[origin=c]{-20}{$#1\smallsetminus$}}
 \mspace{-4mu}
}
\newcommand\rsmraise[1]{%
  \ifx#1\displaystyle .8\else
    \ifx#1\textstyle .8\else
      \ifx#1\scriptstyle .6\else
        .45%
      \fi
    \fi
  \fi}

\makeatletter
\newcommand*{\rom}[1]{\expandafter\@slowromancap\romannumeral #1@}
\makeatother


\DeclareMathOperator{\inrad}{inrad}
\DeclareMathOperator{\diam}{diam}

\newtheorem{theorem}{Theorem}[section]
\newtheorem*{thm*}{Theorem}

\newtheorem{lemma}[theorem]{Lemma}

\theoremstyle{definition}

\theoremstyle{definition}

\newtheorem{remark}[theorem]{Remark}

\numberwithin{equation}{section}

\title[Hot-spots free subregions]{A note on hot-spots free subregions of convex domains}

\author[J.~Rohleder]{Jonathan Rohleder}
\address{Matematiska institutionen \\ Stockholms universitet \\
106 91 Stockholm \\
Sweden}
\email{jonathan.rohleder@math.su.se}

\dedicatory{For Henk de Snoo on the occasion of his 80th birthday}

\keywords{Laplacian, eigenfunctions, hot spots, critical points}

\subjclass{35J05,35J25}

\begin{document}

\begin{abstract}
The second eigenfunction of the Neumann Laplacian on convex, planar domains is considered. Inspired by the famous hot spots conjecture and a related result of Steinerberger, we show that potential critical points of this eigenfunction (and, in particular, interior ``hot spots'') cannot be located ``near the center'' of the domain. The region in which critical points are excluded is described explicitly.
\end{abstract}

\maketitle

\section{Introduction}

The famous hot spots conjecture of Jeffrey Rauch suspects eigenfunctions corresponding to the second eigenvalue $\mu_2$ of the Laplacian with Neumann boundary conditions on a bounded domain $\Omega \subset \R^d$ to attain their maximum and minimum only on the boundary. This conjecture is by now known to fail for certain multiply connected domains in $\R^2$ \cite{B05,BW99} as well as for convex domains in high dimensions \cite{DP24+}. On the other hand, the conjecture has been proven for certain classes of planar domains such that domains with symmetries \cite{JN00}, longish domains \cite{AB04,BB99,KT19,R21+} and triangles \cite{JM20,JM20err}. It is widely believed that the hot spots conjecture should be true for any convex domain in two dimensions.

Though proving the hot spots conjecture for general convex planar domains is a hard problem, certain complementary results have been obtained in recent years, especially restricting the possible location of maxima and minima for any second eigenfunction of the Neumann Laplacian. A recent result in this direction is the following theorem.
Here, for a convex bounded domain $\Omega \subset \R^2$, $\psi_2 \in H^2 (\Omega)$ is a non-trivial function satisfying
\begin{align*}
\left\{
\begin{array}{r c l l}
 - \Delta \psi_2 & = & \mu_2 \psi_2 & \text{in}~\Omega, \\[1mm]
 \nabla \psi_2 \cdot \nu & = & 0 & \text{on}~\partial \Omega,
\end{array}
\right.
\end{align*}
where $\nu$ is the exterior unit normal vector field on $\partial \Omega$ (defined almost everywhere) and $\mu_2$ is the second, i.e.\ first non-zero, eigenvalue of the Neumann Laplacian.

\begin{thm*}[Steinerberger \cite{S20}]
There exists a universal constant $c > 0$ such that for any convex bounded domain $\Omega \subset \R^2$, if $\mathbf{x, y} \in \partial \Omega$ are at maximal distance, i.e., $|\mathbf{x} - \mathbf{y}| = \diam (\Omega)$, then $\psi_2$ attains any global maximum at a distance at most $c \cdot \inrad (\Omega)$ from $\{ \mathbf{x}, \mathbf{y}\}$, where $\inrad (\Omega)$ denotes the inradius of $\Omega$.
\end{thm*}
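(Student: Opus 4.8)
The plan is to combine the probabilistic form of the Neumann heat semigroup with two robust geometric facts about convex planar domains, reducing the problem to a quantitative one-dimensionalisation of $\psi_2$ in the direction of the diameter. Normalise so that $\sup_\Omega\psi_2 = 1$ and fix $p\in\overline\Omega$ with $\psi_2(p)=1$; write $D=\diam(\Omega)$, $\rho=\inrad(\Omega)$, put $e=(\mathbf y-\mathbf x)/D$ and $s(x)=(x-\mathbf x)\cdot e\in[0,D]$. Since any two points of $\Omega$ lie at distance at most $D$, there is nothing to prove when $\rho\gtrsim D$; thus one may assume $\rho\ll D$, the regime carrying all of the content. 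Let $(X_t)_{t\ge0}$ be reflected Brownian motion in $\overline\Omega$ with generator the Neumann Laplacian, so that $\psi_2(x)=e^{\mu_2 t}\,\mathbb E_x[\psi_2(X_t)]$ for all $x$ and $t>0$. At $x=p$, since $\psi_2\le 1$, this is the nonnegative identity $\mathbb E_p[1-\psi_2(X_t)]=1-e^{-\mu_2 t}$: for small $t$ the heat distribution started at $p$ must stay concentrated where $\psi_2\approx1$, while at $t\asymp\mu_2^{-1}$ a definite fraction of its mass must sit where $\psi_2$ is bounded away from $1$.

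Two geometric inputs feed into this. First, convexity of $\Omega$ gives the Gaussian lower bound $p^N_\Omega(t,x,y)\ge(4\pi t)^{-1}e^{-|x-y|^2/4t}$ for the Neumann heat kernel, via reflection coupling, so that $X_t$ genuinely spreads at scale $\sqrt t$ about $p$. Second, testing the Rayleigh quotient with the centred linear function $x\mapsto s(x)-\bar s$, and using that the length $\ell$ of the slice $\{x\in\Omega: s(x)=s\}$ is a concave function of $s$ on $[0,D]$ with $\ell(0)=\ell(D)=0$ (as $\mathbf x,\mathbf y$ realise the diameter), so that its ``variance'' is a definite fraction of $D^2$, gives $\mu_2\le C_0 D^{-2}$; combined with the Payne--Weinberger bound $\mu_2\ge\pi^2D^{-2}$ this yields $\mu_2^{-1/2}\asymp D$. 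Hence at the relevant time $t\asymp\mu_2^{-1}$ the reflected motion explores the full longitudinal extent of $\Omega$.

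The core of the argument is then to promote the heat identity — using also $\int_\Omega\psi_2=0$ and the classical structure of the second nodal set (exactly two nodal domains, with common boundary a simple arc meeting $\partial\Omega$ in two points) — to the statement that, up to a controlled error, $\psi_2$ is essentially a function of $s$ alone, behaving like the solution of the effective one-dimensional problem on $[0,D]$, which is unimodal with its maximum at an endpoint of the interval. This forces $s(p)$ to lie within $O(\rho)$ of $0$ or of $D$; and since $\ell$ is concave and vanishes at the endpoints, a point with $s(p)$ that close to (say) $0$ lies within $O(\rho)$ of $\mathbf x$, giving $\dist(p,\{\mathbf x,\mathbf y\})\le c\,\rho$. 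The reduction itself would be organised by slicing $\Omega$ with hyperplanes transverse to $e$ into thin convex pieces, in the spirit of Payne--Weinberger, on each of which the eigenvalue equation together with the heat-semigroup monotonicity give a sharp local comparison.

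The main obstacle is exactly this one-dimensionalisation, and it cannot be sidestepped: the bare heat inequality of the first step is essentially vacuous for thin $\Omega$, where $\{\psi_2\le0\}$ has area $\ll D^2$ and so carries little $X_t$-mass regardless of where $p$ sits, so that the estimate never ``sees the centre''. One therefore needs a genuine dimension-reduction estimate with constants uniform over \emph{all} degenerations of a convex elongated domain — including ``kite''-shaped ones, where $\ell$ is strongly asymmetric and $\rho$ is far below the maximal width of $\Omega$ — and, in particular, control by a universal multiple of $\rho/D$ (or better) of how far the two-dimensional corrections can displace the maximiser of the effective profile from an endpoint. I would expect establishing this uniform reduction to be the bulk of the proof.
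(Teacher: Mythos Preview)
This theorem is not proved in the paper; it is quoted from Steinerberger \cite{S20} as background, and the paper only remarks that the original proof is ``based on Brownian motion techniques''. There is therefore no proof in the paper to compare your proposal against.

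On its own merits: your outline is in the right spirit --- reflected Brownian motion and the heat-semigroup identity $\psi_2(x)=e^{\mu_2 t}\,\mathbb{E}_x[\psi_2(X_t)]$ are indeed the tools Steinerberger uses --- and you have correctly located the real difficulty. The bare identity $\mathbb{E}_p[1-\psi_2(X_t)]=1-e^{-\mu_2 t}$ does not by itself localise $p$ near the tips; as you yourself note, for thin domains the negative nodal domain has small area and carries little heat-kernel mass regardless of where $p$ sits. Everything therefore hinges on the ``one-dimensionalisation'' step, which you describe but do not carry out: a uniform comparison, over all convex degenerations, between $\psi_2$ and the profile of the effective one-dimensional Neumann problem on $[0,D]$, with the displacement of the maximum controlled by a universal multiple of $\rho/D$. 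As written, the proposal is a plan with the crux left open; you explicitly flag that ``establishing this uniform reduction'' would be ``the bulk of the proof''. That is an honest assessment, but it means the proposal has a gap precisely where the content of the theorem lies, and closing it is essentially the substance of \cite{S20}.
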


In other words, the result confirms that the global maxima (and minima) of $\psi_2$ have to lie within a certain distance to the ``tips'' of the domain -- whether they are on the boundary or not. As Steinerberger points out, his proof, based on Brownian motion techniques, is able to produce an upper bound on $c$, even though in certain simple examples it would give some $c \geq 2$. 

The aim of this note is to provide a result of a somewhat similar flavor. It deals with points in the interior of $\Omega$ only but, on the other hand, is more explicit in its character. The techniques which we will use in its proof are not novel. In fact, they have been around for quite a while, even in the context of the hot spots conjecture. They go back at least to work of Miyamoto \cite{M09}, where he proves that the conjecture is true for domains ``close to a disk'', and were recently used by Hatcher in the context of mixed boundary conditions \cite{H24+}. However, to the best of our knowledge it has remained unnoticed so far that a slight modification leads to the following result. In what follows we denote by $j_0 \approx 2.4048$ and $j_1 \approx 3.8317$ the first positive zero of the Bessel function $J_0$ and $J_1$ of order zero and one, respectively. 

\begin{theorem}\label{thm:main}
Let $\Omega \subset \R^2$ be a convex, bounded open set with diameter $\diam (\Omega)$. If $\psi_2$ has a critical point $\mathbf{x_0}$ in $\Omega$, then there exists $\mathbf{y} \in \partial \Omega$ such that
\begin{align*}
 \dist (\mathbf{x_0, y}) > \frac{j_1}{2 j_0} \diam (\Omega) \approx 0.7967 \diam (\Omega),
\end{align*}
where $\dist$ denotes the Euclidean distance in $\R^2$. 
\end{theorem}

Roughly speaking, Theorem \ref{thm:main} excludes critical points of $\psi_2$ ``near the center'' of $\Omega$. Since interior global extrema of $\psi_2$ are, in particular, critical points, this implies a restriction on the possible hot spots of any $\psi_2$. For instance, if $\Omega$ is not far from a disk, then the theorem prohibits critical points near the center. On the other hand, if $\Omega$ is rather longish, it indeed asserts that all possible critical points must lie close to the tips. In Figure \ref{fig:examples}, Theorem \ref{thm:main} is illustrated at the examples of a disk and an ellipse. (We point out that in these explicit cases the hot spots conjecture is known to hold true.)

\begin{figure}[h]
\includegraphics[scale=0.3]{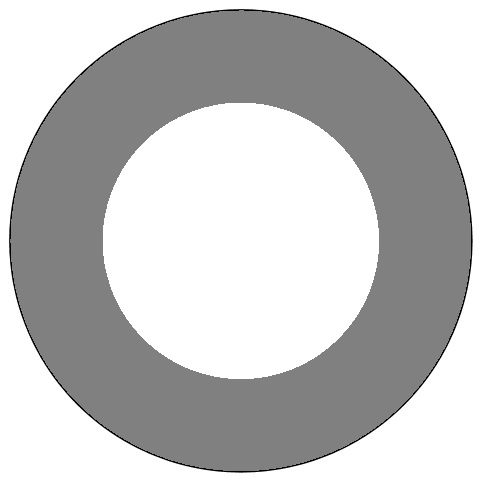} \hspace{10mm}
\includegraphics[scale=0.3]{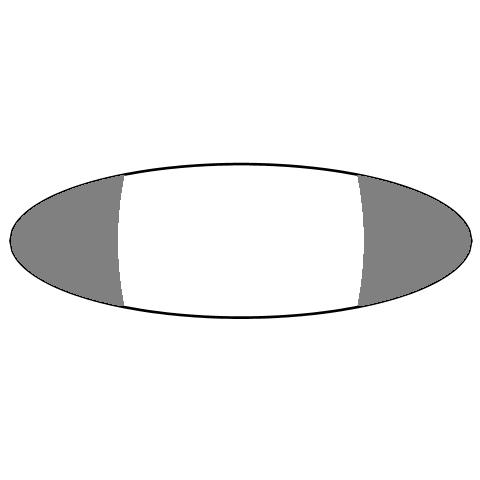}

\caption{A circle and an ellipse; according to Theorem \ref{thm:main}, critical points of $\psi_2$ may only be located in the respective gray region.}
\label{fig:examples}
\end{figure}

The next section contains the proof of Theorem \ref{thm:main}. Miyamoto \cite{M09} uses the nodal structure of solutions to $- \Delta w = \mu_2 w$ near degenerate zeroes in a smart way and combines it with the Szeg\H{o}--Weinberger inequality, i.e.\ with a bound for $\mu_2$ in terms of the area of $\Omega$. In the present note we modify this approach by using a diameter-based bound on $\mu_2$ instead. More precisely, we make use of the inequality
\begin{align}\label{eq:Kroger}
 \mu_2 \leq \frac{4 j_0^2}{\diam (\Omega)^2}
\end{align}
proven by Kr\"oger \cite{K99} for any convex $\Omega \subset \R^2$.

\section{Proof of Theorem \ref{thm:main}}

In this section we prove the main result of this note, based on ideas of Miyamoto \cite{M09}. We will use a few elementary properties of some Bessel functions of the first kind.

Recall that the Bessel function $J_0$ of order zero satisfies the differential equation
\begin{align}\label{eq:Bessel0}
 x^2 \frac{\dd^2 u}{\dd x^2} + x \frac{\dd u}{\dd x} + x^2 u = 0
\end{align}
and the normalization $J_0 (0) = 1$. Furthermore, its derivative is given by $J_0' = - J_1$, the negative of the Bessel function $J_1$ of first order. In particular, $J_0' (0) = 0$ and $J_0' (x) < 0$ on $(0, j_1)$, where $j_1$ denotes the first positive zero of $J_1$.

We will also make use of the following lemma. For completeness we provide its short proof.

\begin{lemma}\label{lem:nodal}
Let $\Omega \subset \R^2$ be a bounded Lipschitz domain and let $\mu_2$ be the second eigenvalue of the Neumann Laplacian on $\Omega$. If $w \in H^1 (\Omega)$ is a non-trivial function satisfying $- \Delta w = \mu_2 w$ in the distributional sense, then $w$ does not have any interior nodal domain; that is, there is no open, non-empty set $\Omega' \subset \Omega$ such that $w |_{\Omega'}$ belongs to $H_0^1 (\Omega')$.
\end{lemma}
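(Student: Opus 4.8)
The plan is to argue by contradiction: suppose $\Omega' \subset \Omega$ is a non-empty open set with $w|_{\Omega'} \in H_0^1(\Omega')$. Since $-\Delta w = \mu_2 w$ weakly on $\Omega$, restricting test functions to those supported in $\Omega'$ shows that $w|_{\Omega'}$ is a Dirichlet eigenfunction of $-\Delta$ on $\Omega'$ with eigenvalue $\mu_2$. In particular $\mu_2$ is a Dirichlet eigenvalue of $\Omega'$, hence $\mu_2 \geq \lambda_1^D(\Omega')$, the first Dirichlet eigenvalue of $\Omega'$. By domain monotonicity of Dirichlet eigenvalues, $\lambda_1^D(\Omega') \geq \lambda_1^D(\Omega)$, so $\mu_2 \geq \lambda_1^D(\Omega)$.

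The heart of the matter is then the strict inequality $\mu_2 < \lambda_1^D(\Omega)$ for a bounded Lipschitz domain, where $\mu_2$ is the second Neumann eigenvalue (the first non-trivial one). This is a classical fact; I would invoke it directly, but for completeness one can recall the argument: by the variational (min-max) characterizations, $\lambda_1^D(\Omega) = \inf\{ \|\nabla u\|^2/\|u\|^2 : u \in H_0^1(\Omega)\setminus\{0\}\}$ and $\mu_2 = \inf\{ \|\nabla u\|^2/\|u\|^2 : u \in H^1(\Omega)\setminus\{0\}, \int_\Omega u = 0 \}$. Since constants lie in $H^1(\Omega)$ but not $H_0^1(\Omega)$, the Dirichlet form has a strictly larger infimum; a short additional argument (e.g. comparing a Dirichlet eigenfunction, extended by $0$, against the two-dimensional space spanned by it and a constant, or using that $\lambda_1^D$ is simple with a positive eigenfunction which therefore cannot be orthogonal to the constants) upgrades this to the strict inequality $\mu_2 < \lambda_1^D(\Omega)$. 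Combining with the previous paragraph yields $\mu_2 < \lambda_1^D(\Omega) \leq \mu_2$, a contradiction, so no such $\Omega'$ exists.

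The main obstacle, such as it is, is purely expository: making sure the restriction argument is stated cleanly (that $w|_{\Omega'} \in H_0^1(\Omega')$ together with the distributional equation on $\Omega$ genuinely gives a Dirichlet eigenfunction on $\Omega'$, which is immediate by testing with $C_c^\infty(\Omega')$ functions), and deciding how much of the standard inequality $\mu_2 < \lambda_1^D(\Omega)$ to spell out versus cite. No delicate estimates are needed; the proof is a few lines once the variational comparisons are set up.
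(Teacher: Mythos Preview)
Your proof is correct and follows essentially the same route as the paper: argue by contradiction, observe that $w|_{\Omega'}$ is a Dirichlet eigenfunction on $\Omega'$, and then combine domain monotonicity with P\'olya's inequality $\mu_2 < \lambda_1^D(\Omega)$ to reach $\mu_2 < \mu_2$. The one point the paper makes explicit that you leave implicit is that $w|_{\Omega'}$ is non-trivial---this follows from unique continuation for solutions of $-\Delta w = \mu_2 w$, and is needed to conclude that $w|_{\Omega'}$ is genuinely an eigenfunction rather than the zero function.
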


\begin{proof}
Assume the converse, i.e., there exists an open, non-empty set $\Omega' \subset \Omega$ such that $u := w |_{\Omega'} \in H_0^1 (\Omega')$. Clearly, $- \Delta u = \mu_2 u$ in $\Omega'$, and, due to unique continuation, $u$ is non-trivial, and, thus, an eigenfunction of the Dirichlet Laplacian on $\Omega'$ with corresponding eigenvalue $\mu_2$. Let us denote by $\lambda_1 (\Omega)$ and $\lambda_1 (\Omega')$ the first Dirichlet Laplacian eigenvalues  on $\Omega$ and $\Omega'$, respectively. Using P\'olya's classical result $\mu_2 < \lambda_1 (\Omega)$ \cite{P52} and domain monotonicity of Dirichlet Laplacian eigenvalues, we obtain
\begin{align*}
 \mu_2 < \lambda_1 (\Omega) \leq \lambda_1 (\Omega') \leq \mu_2,
\end{align*}
a contradiction.
\end{proof}

We can now proceed with the proof of the main observation of this note.

\begin{proof}[Proof of Theorem \ref{thm:main}]
Let us assume that $\mathbf{x_0} \in \Omega$ is a critical point of an eigenfunction $\psi_2$ of the Neumann Laplacian on $\Omega$ corresponding to the eigenvalue $\mu_2$. Because of the translation invariance of the Laplacian we may assume, without loss of generality, that $\mathbf{x_0} = \mathbf{0}$. 

For a contradiction, assume now that 
\begin{align}\label{eq:contradiction}
 |\mathbf{y}| = \dist (\mathbf{0}, \mathbf{y}) \leq \frac{j_1}{2 j_0} \diam (\Omega) \quad \text{for all}~\mathbf{y} \in \partial \Omega.
\end{align}
Possibly after replacing $\psi_2$ by $- \psi_2$, we have
\begin{align}\label{eq:sign}
 \psi_2 (\mathbf{0}) \geq 0.
\end{align}
Using polar coordinates $\mathbf{x} = (r \cos \theta, r \sin \theta)$, let us set 
\begin{align*}
 w (\mathbf{x}) := \psi_2 (\mathbf{0}) J_0 \left( \sqrt{\mu_2} r \right) - \psi_2 (\mathbf{x}), \quad \mathbf{x} \in \Omega.
\end{align*}
From \eqref{eq:Bessel0} we get
\begin{align}\label{eq:Laplace}
\begin{split}
 - \Delta w (\mathbf{x}) & = - \mu_2 \psi_2 (\mathbf{0}) \left( J_0'' (\sqrt{\mu_2} r) + \frac{1}{\sqrt{\mu_2} r} J_0' (\sqrt{\mu_2} r) \right) - \mu_2 \psi_2 (\mathbf{x}) \\
 & = \mu_2 \psi_2 (\mathbf{0}) J_0 (\sqrt{\mu_2} r) - \mu_2 \psi_2 (\mathbf{x}) \\
 & = \mu_2 w (\mathbf{x})
\end{split}
\end{align}
for $\mathbf{x} \in \Omega$. Moreover,
\begin{align*}
 w (\mathbf{0}) = \psi_2 (\mathbf{0}) J_0 (0) - \psi_2 (\mathbf{0}) = 0
\end{align*}
and
\begin{align*}
 \nabla w (\mathbf{0}) = \psi_2 (\mathbf{0}) \sqrt{\mu_2} J_0' (0) \frac{1}{r} \mathbf{x} |_{\mathbf{x} = \mathbf{0}} + \nabla \psi_2 (\mathbf{0}) = 0,
\end{align*}
as $\mathbf{x_0} = \mathbf{0}$ is a critical point of $\psi_2$. It is a consequence of the latter two identities together with \eqref{eq:Laplace} that $w$ is a solution to $- \Delta w = \mu_2 w$ that has a zero of order at least two at $\mathbf{x_0} = \mathbf{0}$. Hence, the nodal set $\{ \mathbf{x} \in \Omega : w (\mathbf{x}) = 0 \}$ of $w$ consists, in a neighborhood of $\mathbf{x_0} = \mathbf{0}$, of at least four branches meeting at $\mathbf{0}$; see, e.g., \cite[Proposition 2.1]{M09} for details and references. Note further that by Lemma \ref{lem:nodal} $w$ has no interior nodal domain. Combining these observations implies that $w$ has at least four distinct nodal domains whose boundary contains $\mathbf{x_0} = \mathbf{0}$ and that $w$ is positive on at least two such nodal domains, which we call $\Omega_1^+$ and $\Omega_2^+$; see Figure \ref{fig:nodal}.
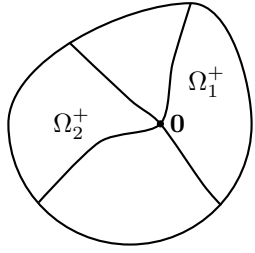
\begin{figure}[h!]
\begin{tikzpicture}[scale=0.8]
 \draw[looseness=0.75,thick] (-3,0) arc (180:360:2) to[out=90,in=0] (0,2) to[out=180,in=90] (-3,0) -- cycle;
 \draw[black,fill=black] (-0.5,0) circle(0.05) node[right,black]{$\mathbf{0}$};
 \draw[thick] plot[smooth] coordinates {(-2.5,-1.3) (-1.5,-0.3) (-0.5,0) (-0.3,1) (0,2) };
 \draw[thick] plot[smooth] coordinates {(-2,1.35) (-1,0.4) (-0.5,0) (0.2,-1) (0.5,-1.35) };
 \draw[white] (-0.2,0.7) circle(0.05) node[right,black]{$\Omega_1^+$};
 \draw[white] (-1.5,0) circle(0.05) node[left,black]{$\Omega_2^+$};
\end{tikzpicture}
\caption{Typical structure of the nodal lines of $w$ in $\Omega$; $w$ is positive on both $\Omega_1^+$ and $\Omega_2^+$.}
\label{fig:nodal}
\end{figure}

Note that for almost all $\mathbf{x} = (r \cos \theta, r \sin \theta) \in \partial \Omega$, the normal derivative $\partial_\nu w = \nabla w \cdot \nu$ satisfies
\begin{align}\label{eq:normalDerivative}
 (\partial_\nu w) (\mathbf{x}) = \psi_2 (\mathbf{0}) \sqrt{\mu_2} J_0' (\sqrt{\mu_2} r) \frac{1}{r} \mathbf{x} \cdot \nu.
\end{align}
Furthermore, using the spectral bound \eqref{eq:Kroger} and the assumption \eqref{eq:contradiction} we get for $\mathbf{x} = (r \cos \theta, r \sin \theta) \in \partial \Omega$
\begin{align*}
 \sqrt{\mu_2} r \leq \sqrt{\mu_2} \sup_{\mathbf{y} \in \partial \Omega} |\mathbf{y}| \leq \frac{2 j_0}{\diam (\Omega)} \frac{j_1}{2 j_0} \diam (\Omega) = j_1,
\end{align*}
and since $J_0' (x) < 0$ for all $x \in (0, j_1)$ it follows $J_0' (\sqrt{\mu_2} r) \leq 0$ if $\mathbf{x} \in \partial \Omega$. Moreover, the convexity of $\Omega$ implies $\mathbf{x} \cdot \nu > 0$ for all $\mathbf{x} \in \partial \Omega$. Especially, \eqref{eq:normalDerivative} gives
\begin{align*}
 (\partial_\nu w) (\mathbf{x}) \leq 0 \quad \text{for all}~\mathbf{x} \in \partial \Omega.
\end{align*}
Now, as $\varphi_j := w |_{\Omega_j^+}$ is non-negative and vanishes on $\partial \Omega_j^+ \setminus \partial \Omega$, $j = 1, 2$, we have
\begin{align}\label{eq:yeah}
 \int_{\Omega_j^+} |\nabla \varphi_j|^2 = \mu_2 \int_{\Omega_j^+} |\varphi_j|^2 + \int_{\partial \Omega_j^+} w \partial_\nu w \leq \mu_2 \int_{\Omega_j^+} |\varphi_j|^2, \quad j = 1, 2.
\end{align}
Let $\widetilde \varphi_j \in H^1 (\Omega)$ denote the extension by zero of $\varphi_j$ to all of $\Omega$, $j = 1, 2$, and let $u := \alpha_1 \widetilde \varphi_1 + \alpha_2 \widetilde \varphi_2$, where $\alpha_1, \alpha_2 \in \R$ are chosen such that $\int_\Omega u = 0$. As $u$ vanishes identically on $\Omega \setminus (\Omega_1^+ \cup \Omega_2^+)$, it is not an eigenfunction of the Neumann Laplacian on $\Omega$ and, hence,
\begin{align}\label{eq:amazing}
\begin{split}
 \mu_2 \int_\Omega |u|^2 & < \int_\Omega |\nabla u|^2 = \alpha_1^2 \int_{\Omega_1^+} |\nabla \varphi_1|^2 + \alpha_2^2 \int_{\Omega_2^+} |\nabla \varphi_2|^2 \\
 & \leq \mu_2 \left( \alpha_1^2 \int_{\Omega_1^+} |\varphi_1|^2 + \alpha_2^2 \int_{\Omega_2^+} |\varphi_2|^2 \right),
\end{split}
\end{align}
by \eqref{eq:yeah} and the fact that $\varphi_1, \varphi_2$ have disjoint supports. On the other hand, the left-hand side of \eqref{eq:amazing} equals
\begin{align*}
 \mu_2 \int_\Omega |u|^2 = \mu_2 \left( \alpha_1^2 \int_{\Omega_1^+} |\varphi_1|^2 + \alpha_2^2 \int_{\Omega_2^+} |\varphi_2|^2 \right),
\end{align*}
leading \eqref{eq:amazing} to a contradiction. This completes the proof.
\end{proof}

\begin{remark}
The essentially same proof also leads to the following result: if $\Omega \subset \R^2$ is a convex, bounded open set such that 
\begin{align}\label{eq:strongKroger}
 \mu_2 \leq \frac{j_1^2}{\diam (\Omega)^2}
\end{align}
holds, then $\psi_2$ has no critical points in $\Omega$ and, in particular, the hot spots conjecture is true for $\Omega$. The condition \eqref{eq:strongKroger} is satisfied for certain convex domains. For instance, the Payne--Weinberger inequality \cite{PW60} asserts that the sharp inequality
\begin{align*}
 \mu_2 \geq \frac{\pi^2}{\diam (\Omega)}
\end{align*}
holds for any convex domain, with equality in the limit when $\Omega$ degenerates to a one-dimensional interval. Since $\frac{j_1}{\pi} \approx 1.2197$, \eqref{eq:strongKroger} is satisfied when $\Omega$ is a convex domain sufficiently close to an interval.
\end{remark}

%

\subsection*{Competing interests}

No competing interests prevail.


\subsection*{Acknowledgments}

The author acknowledges financial support by the grant no.\ 2022-03342 of the Swedish Research Council (VR).

\end{document}